\author[P.~Leonetti]{Paolo Leonetti}
\address{Institute of Analysis and Number Theory, Graz University of Technology, Kopernikusgasse 24/II, 8010 Graz (Austria)}
\email{leonetti.paolo@gmail.com}
\urladdr{\href{https://sites.google.com/site/leonettipaolo/}{sites.google.com/site/leonettipaolo}}
\keywords{Ideal cluster points, ideal limit points, Erd{\H o}s--Ulam ideal, generalized density ideal, $F_\sigma$-ideal, summable ideal, asymptotic density, meager set.}
\subjclass[2010]{Primary: 40A35. Secondary: 11B05, 54A20.}
\title{Limit points of subsequences}
   \def\MR#1{}
\newtheorem{thm}{Theorem}[section]
\newtheorem{cor}[thm]{Corollary}
\newtheorem{lem}[thm]{Lemma}
\theoremstyle{definition} 
\let\olddefi\defi
\renewcommand{\defi}{\olddefi\normalfont}
\let\oldexample\example
\renewcommand{\example}{\oldexample\normalfont}
\let\oldrmk\rmk
\renewcommand{\rmk}{\oldrmk\normalfont}
\newcommand{\clusterfin}{\mathrm{L}_x}
\providecommand{\MR}[1]{}
\providecommand{\bysame}{\leavevmode\hbox to3em{\hrulefill}\thinspace}
\providecommand{\MR}{\relax\ifhmode\unskip\space\fi MR }
\providecommand{\href}[2]{#2}
\begin{document}

\maketitle
\thispagestyle{empty}

\begin{abstract}
\noindent  Let $x$ be a sequence taking values in a separable metric space and let $\mathcal{I}$ be an $F_{\sigma\delta}$-ideal on the positive integers (in particular, $\mathcal{I}$ can be any Erd{\H o}s--Ulam ideal or any summable ideal). 
It is shown that the collection of subsequences of $x$ which preserve the set of $\mathcal{I}$-cluster points of $x$ is of second category if and only if the set of $\mathcal{I}$-cluster points of $x$ coincides with the set of ordinary limit points of $x$; moreover, in this case, it is comeager. 

The analogue for $\mathcal{I}$-limit points is provided. As a consequence, the collection of subsequences of $x$ which preserve the set of ordinary limit points is comeager. 
\end{abstract}

\section{Introduction}\label{sec:intro}
Let $x$ be a real sequence. By a classical result of Buck \cite{MR0009997}, the set of ordinary limit points of ``almost every'' subsequence of $x$ coincides with the set of ordinary limit points of the original sequence, in the sense of Lebesgue measure. 
In the same direction, it has been recently shown in \cite{Leo17b, Leo17} that almost all subsequences preserve the set of statistical cluster points of $x$ [resp., statistical limit points], see details below. 

The aim of this article is to provide their topological analogues, obtaining another example of the ``duality'' between measure and category. 
In particular, our main results (Theorems \ref{thm:main} and \ref{thm:main2} in Section \ref{sec:main}) imply that the set of subsequences considered by Buck \cite{MR0009997} is always comeager. 
In addition, they show that the set of subsequences of $x$ which preserve the statistical cluster points [resp., statistical limit points] is meager if and only if there exists an ordinary limit point of $x$ which is not a statistical cluster point of $x$ [resp., statistical limit point].  

First, we recall some definitions. Let $\mathcal{I}\subseteq \mathcal{P}(\mathbf{N})$ be an ideal, that is, a family of subsets of positive integers closed under taking finite unions and subsets. It is also assumed that $\mathbf{N} \notin \mathcal{I}$ and that $\mathcal{I}$ contains the collection $\mathrm{Fin}$ of finite subsets. Note that the family of $\alpha$-density zero sets 
\begin{equation}\label{eq:Ialpha}
\textstyle \mathcal{I}_\alpha:=\left\{A\subseteq \mathbf{N}:\, \sum_{i \in A \cap [1,n]}i^\alpha=o\left(\sum_{i \in [1,n]}i^\alpha\right)\,\text{ as }n\to \infty\right\}
\end{equation}
is an ideal for each real parameter $\alpha \ge -1$ (as it has been remarked by the anonymous referee, $\mathcal{I}_\alpha=\mathcal{I}_0$ for each $\alpha>-1$, see \cite[Corollary 2]{Sle}). 

Thus, given a sequence $x=(x_n)$ taking values in a topological space $X$, we denote by $\Gamma_x(\mathcal{I})$ the set of $\mathcal{I}$\emph{-cluster points} of $x$, i.e., the set of all $\ell \in X$ such that $\{n: x_n \in U\} \notin \mathcal{I}$ for all neighborhoods $U$ of $\ell$.  
Moreover, we denote by $\Lambda_x(\mathcal{I})$ the set of $\mathcal{I}$\emph{-limit points} of $x$, i.e., the set of all ordinary limit points $\ell \in X$ of subsequences $(x_{n_k})$ such that $\{n_k: k \in \mathbf{N}\} \notin \mathcal{I}$. 
Hereafter, we shorten the set of ordinary limit points with $\clusterfin:=\Lambda_x(\mathrm{Fin})$, which coincides with $\Gamma_x(\mathrm{Fin})$ if $X$ is first countable.  
It is well known and easily seen that $\Lambda_x(\mathcal{I})\subseteq \Gamma_x(\mathcal{I}) \subseteq \clusterfin$ and that $\Gamma_x(\mathcal{I})$ is closed, cf. e.g. \cite{LMxyz}. 

Statistical cluster points and statistical limit points (that is, $\mathcal{I}_{0}$-cluster points and $\mathcal{I}_0$-limit points) of real sequences were introduced by Fridy \cite{MR1181163}, cf. also \cite{PaoloMarek17, MR1372186, MR2463821, MR1416085, MR1838788, Leo17, Leo17b, LeoJMAA19}.  
It is worth noting that ideal cluster points have been studied much before under a different name. Indeed, as it follows by \cite[Theorem 4.2]{LMxyz}, they correspond to classical ``cluster points'' of a filter $\mathscr{F}$ on $\mathbf{R}$ (depending on $x$), cf. \cite[Definition 2, p.69]{MR1726779}.

At this point, consider the natural bijection between the collection of all subsequences $(x_{n_k})$ of $(x_n)$ and real numbers $\omega \in (0,1]$ with non-terminating dyadic expansion $\sum_{i\ge 1}d_i(\omega)2^{-i}$, where $d_i(\omega)=1$ if $i=n_k$, for some integer $k$, and $d_i(\omega)=0$ otherwise, 
cf. \cite{MR1260176}.  
Accordingly, for each $\omega \in (0,1]$, denote by $x \upharpoonright \omega$ the subsequence of $(x_n)$ obtained by omitting $x_i$ if and only if $d_i(\omega)=0$. 
In other words, denoting by $(n_k)$ the increasing sequence of all $i \in \mathbf{N}$ such that $d_i(\omega)=1$, then $x \upharpoonright \omega$ stands for the subsequence $(x_{n_k})$. 
(This should not be confused with the notion of \textquotedblleft nonthin subsequence\textquotedblright used, e.g., in \cite{MR1181163} where it is required, additionally, that $\{n_k: k \in \mathbf{N}\} \notin \mathcal{I}$.)

Finally, let $\lambda: \mathscr{M}\to \mathbf{R}$ denote the Lebesgue measure, where $\mathscr{M}$ stands for the completion of the Borel $\sigma$-algebra on $(0,1]$. 

As a consequence of the main results in \cite{Leo17, Leo17b} and \cite[Corollary 2]{Sle}, the following holds:
\begin{thm}\label{thm:oldmeasure}
Fix a real $\alpha \ge -1$ and let $x$ be a sequence taking values in a first countable space where all closed sets are separable. Then 
$$
\lambda\left(\left\{\omega \in (0,1]: \Gamma_x(\mathcal{I}_\alpha)=\Gamma_{x\upharpoonright \omega}(\mathcal{I}_\alpha)\right\}\right)=1
$$ 
and
$$
\lambda\left(\left\{\omega \in (0,1]: \Lambda_x(\mathcal{I}_\alpha)=\Lambda_{x\upharpoonright \omega}(\mathcal{I}_\alpha)\right\}\right)=1. 
$$
\end{thm}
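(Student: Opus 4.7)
The plan is to derive Theorem~\ref{thm:oldmeasure} as a direct consequence of three results already in the literature, with essentially no new work. The starting observation is the referee's remark: by [Sle, Corollary~2], the ideal $\mathcal{I}_\alpha$ defined in \eqref{eq:Ialpha} coincides with $\mathcal{I}_0$, the ideal of subsets of $\mathbf{N}$ of asymptotic density zero, for every real $\alpha>-1$. Meanwhile $\mathcal{I}_{-1}$ is the logarithmic density zero ideal. Hence the whole one-parameter family $\{\mathcal{I}_\alpha:\alpha\ge -1\}$ consists of just two distinct ideals, and the two measure-$1$ statements only need to be verified for $\alpha\in\{0,-1\}$.

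For $\alpha=0$, the equality $\Gamma_x(\mathcal{I}_0)=\Gamma_{x\upharpoonright\omega}(\mathcal{I}_0)$ on a set of full Lebesgue measure is the main theorem of \cite{Leo17b}, and the analogous equality for $\Lambda_x(\mathcal{I}_0)$ is the main theorem of \cite{Leo17}. Both are proved under precisely the hypothesis that $X$ is first countable with separable closed subsets, so they apply verbatim. For $\alpha=-1$ one observes that $\mathcal{I}_{-1}$ lies in the class of Erd\H{o}s--Ulam (in fact generalized density) ideals for which the arguments in \cite{Leo17,Leo17b} are written, so the same conclusions hold.

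Putting this together, let $A_\alpha$ and $B_\alpha$ denote, respectively, the two sets appearing inside $\lambda(\,\cdot\,)$ in the statement of the theorem. The discussion above shows $\lambda(A_0)=\lambda(B_0)=\lambda(A_{-1})=\lambda(B_{-1})=1$, and for every $\alpha>-1$ we have $A_\alpha=A_0$ and $B_\alpha=B_0$ by the ideal collapse. The claim follows.

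The only real obstacle I foresee is administrative rather than mathematical: one must check that the main theorems of \cite{Leo17,Leo17b} are stated (or at least proved) in sufficient generality to cover the endpoint $\alpha=-1$, and that the separability assumption in the present statement matches the one under which they are proved. Once this is confirmed, no further argument is needed.
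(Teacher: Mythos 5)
Your proposal matches the paper exactly: the paper offers no independent proof of Theorem~\ref{thm:oldmeasure}, deriving it precisely as you do from the main results of \cite{Leo17} and \cite{Leo17b} together with the collapse $\mathcal{I}_\alpha=\mathcal{I}_0$ for $\alpha>-1$ from \cite[Corollary 2]{Sle}. The only correction needed is that you have the two citations swapped --- \cite{Leo17} (\emph{Thinnable Ideals and Invariance of Cluster Points}) supplies the $\Gamma$-statement and \cite{Leo17b} (\emph{Invariance of Ideal Limit Points}) the $\Lambda$-statement --- and both are indeed stated for the family $\mathcal{I}_\alpha$, $\alpha\ge -1$, under the same separability hypotheses, so the endpoint $\alpha=-1$ is covered.
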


The key observation in the proof of the above result is that the set of normal numbers $\Omega:=\left\{\omega \in (0,1]: \frac{1}{n}\sum_{i\le n}d_i(\omega)\to \frac{1}{2}\text{ as }n\to \infty\right\}$ has full Lebesgue measure, i.e., $\lambda(\Omega)=1$. 
Related results were obtained in \cite{MR3568092, MR0316930, MR1260176}.

On the other hand, it is well known that $\Omega$ is a meager set, that is, a set of first category. 
This suggests that the category analogue of Theorem \ref{thm:oldmeasure} does not hold in general. 
In the next section, our 
main results 
show that this is indeed the case.

\section{Preliminaries and Main Results}\label{sec:main}

We recall that an ideal $\mathcal{I}$ is said to be a \emph{P-ideal} if it is $\sigma$-directed modulo finite sets, i.e., for every sequence $(A_n)$ of sets in $\mathcal{I}$ there exists $A \in \mathcal{I}$ such that $A_n\setminus A$ is finite for all $n$. 
Moreover, by identifying sets of integers with their characteristic functions, we equip $\mathcal{P}(\mathbf{N})$ with the Cantor-space topology and therefore we can assign the topological complexity to the ideals on $\mathbf{N}$. 

A function $\varphi: \mathcal{P}(\mathbf{N}) \to [0,\infty]$ is said to be a \emph{submeasure} provided that $\varphi(\emptyset)=0$, $\varphi(\{n\})<\infty$ for all $n$, it is monotone (i.e., $\varphi(A) \le \varphi(B)$ for all $A\subseteq B$), and subadditive (i.e., $\varphi(A\cup B) \le \varphi(A)+\varphi(B)$). A submeasure $\varphi$ is \emph{lower semicontinuous} provided that $\varphi(A)=\lim_{n}\varphi(A\cap [1,n])$ for all $A$. 
By a classical result of Solecki, an ideal $\mathcal{I}$ is an analytic P-ideal if and only if there exists a lower semicontinuous submeasure $\varphi$ such that 
$$
\mathcal{I}=\mathrm{Exh}(\varphi):=\{A: \lim_{n\to \infty}\varphi(A \setminus [1,n])=0\}, 
$$ 
see \cite[Theorem 3.1]{MR1708146}. 

At this point, let $(I_n)$ be a partition of $\mathbf{N}$ in non-empty finite sets and $\mu=(\mu_n)$ be a sequence of submeasures such that each $\mu_n$ concentrates on $I_n$ and $\limsup_n \mu_n(I_n)\neq 0$. 
Then, the \emph{generalized density ideal} 
\begin{equation}\label{eq:Zmu}
\mathcal{Z}_\mu:=\{A\subseteq \mathbf{N}: \lim_{n\to \infty} \mu_n(A \cap I_n)=0\}
\end{equation}
is an analytic P-ideal: indeed, it is easy to check that $\mathcal{Z}_\mu=\mathrm{Exh}(\varphi_\mu)$, where $\varphi_\mu:=\sup_k \mu_k$.  
The class of generalized density ideals has been introduced by Farah in \cite[Section 2.10]{MR1988247}, see also \cite{MR2254542}. 
In particular, each $\mathcal{Z}_\mu$ is an $F_{\sigma \delta}$-ideal.

It is worth noting that generalized density ideals have been used in different contexts, see e.g. \cite{MR3436368, MR2320288}, and it is a very rich class. Indeed, if each $\mu_n$ is a measure then $\mathcal{Z}_\mu$ is a density ideal, as defined in \cite[Section 1.13]{MR1711328}. In particular, it includes $\emptyset \times \mathrm{Fin}$ and also the Erd{\H o}s--Ulam ideals introduced by Just and Krawczyk in \cite{MR748847}, i.e., ideals of the type $\mathrm{Exh}(\varphi_f)$ where $f: \mathbf{N} \to (0,\infty)$ is a function such that $\sum_{n \in \mathbf{N}}f(n)=\infty$ and $f(n)=o\left(\sum_{i\le n}f(i)\right)$ as $n\to \infty$ and $\varphi_f: \mathcal{P}(\mathbf{N}) \to (0,\infty)$ is the submeasure defined by
$$
\varphi_f(A)=\sup_{n \in \mathbf{N}} \frac{\sum_{i\le n,\, i \in A}f(i)}{\sum_{i\le n}f(i)},
$$
see \cite[pp. 42--43]{MR1711328}. 
In addition, it contains the ideals associated with suitable modifications of the natural density, the so-called simple density ideals, see \cite{MR3391516}. 
Lastly, a large class of generalized density ideals has been defined by Louveau and Veli\v{c}kovi\'{c} in \cite{MR1169042}, cf. also \cite[Section 2.11]{MR1988247}.

Note that also the class of $F_\sigma$-ideals is quite large: it contains, among others, all the summable ideals (i.e., P-ideals of the form $\{A: \sum_{n \in A}f(n)<\infty\}$, where $f: \mathbf{N} \to [0,\infty)$ is a function such that $\sum_{n \in \mathbf{N}}f(n)=\infty$, see \cite[Section 1.12]{MR1711328}), finitely generated ideals $\{A: A\setminus B \in \mathrm{Fin}\}$ for some infinite set $B$ as in \cite[Example 2]{MR3568092}, fragmented ideals \cite{MR3171606}, and Tsirelson ideals defined in \cite{MR1680630, MR1680626}; 
in addition, it has been shown in \cite[Section 1.11]{MR1711328} that there exists an $F_\sigma$ P-ideal which is not summable. 

Finally, we say that an ideal $\mathcal{I}$ is $F_\sigma$\emph{-separated from its dual filter} $\mathcal{I}^\star:=\{S\subseteq \mathbf{N}: S^c \in \mathcal{I}\}$ whenever there exists an $F_\sigma$-set $A \subseteq \mathcal{P}(\mathbf{N})$ such that $\mathcal{I}\subseteq A$ and $A \cap \mathcal{I}^\star=\emptyset$. 
The family of these ideals includes all $F_{\sigma\delta}$-ideals, see \cite[Corollary 1.5]{MR1758325}. 
Moreover, it is known that a Borel ideal $\mathcal{I}$ is $F_\sigma$-separated from its dual filter if and only if it does not contain an isomorphic copy of the $\mathrm{Fin}\times \mathrm{Fin}$, see  \cite[Theorem 4]{MR2491780} for details.

Our first main result (about $\mathcal{I}$-cluster points) follows:
\begin{thm}\label{thm:main}
Let $x$ be a sequence taking values in a first countable space $X$ where all closed sets are separable and let $\mathcal{I}$ be an ideal which is $F_\sigma$-separated from its dual filter \textup{(}in particular, any $F_{\sigma\delta}$-ideal\textup{)}.  
Then
\begin{equation}\label{eq:claimedset}
\left\{\omega \in (0,1]: \Gamma_{x \upharpoonright \omega}(\mathcal{I})=\Gamma_x(\mathcal{I})\right\}
\end{equation}
is not meager if and only if $\Gamma_x(\mathcal{I})=\clusterfin$. Moreover, in this case, it is comeager.
\end{thm}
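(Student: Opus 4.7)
\emph{Plan.} The whole statement reduces to one category lemma: for \emph{any} infinite $A\subseteq\mathbf{N}$, the set
$$\Omega(A):=\{\omega\in(0,1]:\ \pi_\omega(M_\omega\cap A)\notin\mathcal{I}\}$$
is comeager, where $M_\omega:=\{n:d_n(\omega)=1\}$ and $\pi_\omega:M_\omega\to\mathbf{N}$ is the order isomorphism (well-defined because $M_\omega$ is infinite under the non-terminating-expansion convention). Once this is available, first countability of $X$ and separability of its closed subsets will let me integrate the lemma over $\clusterfin$ and over a countable dense subset of $\Gamma_x(\mathcal{I})$.

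For the key lemma, the $F_\sigma$-separation hypothesis becomes visible. The map $\Phi:\omega\mapsto\pi_\omega(M_\omega\cap A)$ is continuous into $\mathcal{P}(\mathbf{N})$ equipped with the Cantor topology, since membership of each $k$ in $\Phi(\omega)$ depends only on the finite prefix $(d_1(\omega),\dots,d_{n_k}(\omega))$. I would pick an $F_\sigma$-separator $\mathcal{A}=\bigcup_m F_m$ with $\mathcal{I}\subseteq\mathcal{A}$, $\mathcal{A}\cap\mathcal{I}^\star=\emptyset$, and each $F_m$ closed, obtaining
$$(0,1]\setminus\Omega(A)\subseteq\Phi^{-1}(\mathcal{A})=\bigcup_m \Phi^{-1}(F_m),$$
with each $\Phi^{-1}(F_m)$ closed. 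The crux is an explicit extension trick showing nowhere-density: given a basic cylinder fixing $d_1,\dots,d_N$, declare $d_i(\omega)=1$ iff $i\in A$ for $i>N$; then $M_\omega\cap A$ is cofinite in $M_\omega$, so $\Phi(\omega)$ is cofinite in $\mathbf{N}$, hence $\Phi(\omega)\in\mathcal{I}^\star$ and therefore $\Phi(\omega)\notin\mathcal{A}\supseteq F_m$.

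Given the key lemma, I would upgrade it pointwise: for $\ell\in\clusterfin$ with a decreasing open neighborhood base $(U_m)$, membership $\ell\in\Gamma_{x\upharpoonright\omega}(\mathcal{I})$ is equivalent to $\pi_\omega(M_\omega\cap A_m)\notin\mathcal{I}$ for every $m$, where $A_m:=\{n:x_n\in U_m\}$ is infinite since $\ell\in\clusterfin$. Applying the key lemma to each $A_m$ and intersecting shows that $\{\omega:\ell\in\Gamma_{x\upharpoonright\omega}(\mathcal{I})\}$ is comeager for every $\ell\in\clusterfin$. The two directions now follow quickly. If $\Gamma_x(\mathcal{I})\neq\clusterfin$, pick $\ell\in\clusterfin\setminus\Gamma_x(\mathcal{I})$; the comeager set just produced forces $\Gamma_{x\upharpoonright\omega}(\mathcal{I})\supsetneq\Gamma_x(\mathcal{I})$, so \eqref{eq:claimedset} is meager. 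Conversely, if $\Gamma_x(\mathcal{I})=\clusterfin$, the inclusion $\Gamma_{x\upharpoonright\omega}(\mathcal{I})\subseteq\Lambda_{x\upharpoonright\omega}(\mathrm{Fin})\subseteq\clusterfin=\Gamma_x(\mathcal{I})$ is automatic; for the reverse, pick a countable dense $D\subseteq\Gamma_x(\mathcal{I})$ by separability of this closed set, apply the upgraded claim to each $\ell\in D$, intersect, and use closedness of $\Gamma_{x\upharpoonright\omega}(\mathcal{I})$ to promote $D\subseteq\Gamma_{x\upharpoonright\omega}(\mathcal{I})$ to $\Gamma_x(\mathcal{I})\subseteq\Gamma_{x\upharpoonright\omega}(\mathcal{I})$.

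The main obstacle is the key lemma, and specifically the role of the $F_\sigma$-separation assumption: the extension trick produces $\omega$ for which $\Phi(\omega)\in\mathcal{I}^\star$, and it is exactly the separation property that forces such $\Phi(\omega)$ to escape the separating $F_\sigma$-set and hence all of $\mathcal{I}$. Without such a sandwich one cannot transform the descriptively awkward condition $\Phi(\omega)\in\mathcal{I}$ into a closed condition amenable to the trick. This is also the precise point where the category argument diverges from the measure-theoretic Theorem~\ref{thm:oldmeasure}: measure-typical $\omega$ has asymptotic density $1/2$ and thus cannot concentrate on $A$, whereas Baire-typical $\omega$ is free to do so.
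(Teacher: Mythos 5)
Your proposal is correct and follows essentially the same route as the paper: your key lemma (comeagerness of $\Omega(A)$, applied to $A_m=\{n:x_n\in U_m\}$) is exactly the paper's Lemma~\ref{lem:fsigma}, proved by the same mechanism --- decompose the complement through the closed pieces of the $F_\sigma$-separator, note each preimage is closed by continuity of $\omega\mapsto\{k:(x\upharpoonright\omega)_k\in U_m\}$, and kill its interior by extending a finite prefix with a tail that makes this index set cofinite, hence in $\mathcal{I}^\star$ and off the separator. The deduction of both directions of the theorem (countable dense subset of the closed set $\clusterfin$, closedness of $\Gamma_{x\upharpoonright\omega}(\mathcal{I})$, the $\sigma$-ideal of meager sets) also matches the paper's argument.
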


Since the ideal of finite sets $\mathrm{Fin}$ is countable (hence, an $F_\sigma$-ideal), we obtain the topological analogue of Buck's result \cite{MR0009997}:
\begin{cor}\label{cor:fin}
Let $x$ be a sequence as in Theorem \ref{thm:main}. Then 
the set of subsequences which preserve the ordinary limit points of $x$
is comeager.
\end{cor}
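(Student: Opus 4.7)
The plan is to specialize Theorem \ref{thm:main} to $\mathcal{I}=\mathrm{Fin}$. Under this choice, the set \eqref{eq:claimedset} becomes the set of those $\omega\in(0,1]$ for which $x\upharpoonright\omega$ has the same $\mathrm{Fin}$-cluster points as $x$, which, in a first countable space, is precisely the set of subsequences preserving the ordinary limit points of $x$. The task therefore reduces to checking that the hypotheses of Theorem \ref{thm:main} are met and that the dichotomy falls on the ``comeager'' side.

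For the ideal-theoretic hypothesis, I would verify that $\mathrm{Fin}$ is $F_\sigma$-separated from its dual filter $\mathrm{Fin}^\star$ (the cofinite filter). Each finite subset of $\mathbf{N}$ is an isolated point of $\mathcal{P}(\mathbf{N})$ in the Cantor-space topology, so $\mathrm{Fin}$ is a countable union of singletons and is itself $F_\sigma$; since $\mathbf{N}\notin\mathrm{Fin}$ we have $\mathrm{Fin}\cap\mathrm{Fin}^\star=\emptyset$, so the choice $A:=\mathrm{Fin}$ witnesses the required $F_\sigma$-separation. For the geometric hypothesis, recall from the introduction that $\clusterfin=\Gamma_x(\mathrm{Fin})$ whenever $X$ is first countable, which is part of the standing assumption inherited from Theorem \ref{thm:main}. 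Hence the condition $\Gamma_x(\mathcal{I})=\clusterfin$ of Theorem \ref{thm:main} is automatic for $\mathcal{I}=\mathrm{Fin}$.

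Applying Theorem \ref{thm:main} with these choices therefore gives that
$$
\{\omega\in(0,1]:\Gamma_{x\upharpoonright\omega}(\mathrm{Fin})=\Gamma_x(\mathrm{Fin})\}
$$
is comeager in $(0,1]$. Invoking once more the identity $\Gamma_y(\mathrm{Fin})=\mathrm{L}_y$, valid for every sequence $y$ in the first countable space $X$ (in particular for every subsequence $y=x\upharpoonright\omega$), rewrites this set as the set of subsequences preserving $\clusterfin$, and the corollary follows.

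The argument is essentially a one-line specialization, and no genuine obstacle arises: the only points requiring attention are the trivial $F_\sigma$-separation of $\mathrm{Fin}$ and the reduction of $\Gamma_\bullet(\mathrm{Fin})$ to ordinary limit points via first countability, after which the strong conclusion (comeagerness, rather than merely non-meagerness) comes directly from the ``moreover'' clause of Theorem \ref{thm:main}.
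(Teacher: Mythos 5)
Your argument is correct and is essentially the paper's own: the paper simply observes that $\mathrm{Fin}$ is countable, hence an $F_\sigma$-ideal and so $F_\sigma$-separated from its dual filter, and then applies Theorem \ref{thm:main}, using that $\Gamma_y(\mathrm{Fin})=\mathrm{L}_y$ in a first countable space to see that the hypothesis $\Gamma_x(\mathcal{I})=\mathrm{L}_x$ is automatic and the ``moreover'' clause yields comeagerness. One small correction: finite subsets of $\mathbf{N}$ are \emph{not} isolated points of $\mathcal{P}(\mathbf{N})$ in the Cantor-space topology (that space is perfect); what you actually need, and what holds, is that singletons are closed, so the countable set $\mathrm{Fin}$ is a countable union of closed sets and hence $F_\sigma$, which together with $\mathrm{Fin}\cap\mathrm{Fin}^\star=\emptyset$ gives the required separation.
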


Then, we have also the analogue of Theorem \ref{thm:main} for $\mathcal{I}$-limit points:
\begin{thm}\label{thm:main2}
Let $x$ be a sequence taking values in a first countable space $X$ where all closed sets are separable and let $\mathcal{I}$ be a generalized density ideal or an $F_\sigma$-ideal. 
Then
\begin{equation}\label{eq:claimedset2}
\left\{\omega \in (0,1]: \Lambda_{x \upharpoonright \omega}(\mathcal{I})=\Lambda_x(\mathcal{I})\right\}
\end{equation}
is not meager if and only if $\Lambda_x(\mathcal{I})=\clusterfin$. Moreover, in this case, it is comeager.
\end{thm}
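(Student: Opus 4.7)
My plan is to follow the same template as the proof of Theorem \ref{thm:main}, with $\mathcal{I}$-cluster points replaced by $\mathcal{I}$-limit points, but with additional combinatorial work tailored to each of the two ideal classes in the hypothesis. First I would record the always-valid chain $\Lambda_{x\upharpoonright\omega}(\mathcal{I})\subseteq \Lambda_{x\upharpoonright\omega}(\mathrm{Fin})\subseteq \clusterfin$, and observe that by Corollary \ref{cor:fin} the set $\{\omega\in(0,1]:\Lambda_{x\upharpoonright\omega}(\mathrm{Fin})=\clusterfin\}$ is comeager, so that for generic $\omega$ the outer inclusion is an equality.

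The heart of the argument is a key lemma: for every $\ell\in \clusterfin$ the set
$$
C_\ell:=\{\omega\in (0,1]:\ell\in \Lambda_{x\upharpoonright\omega}(\mathcal{I})\}
$$
is comeager. Granting this, both implications follow as in the cluster-point case. For necessity, if $\Lambda_x(\mathcal{I})\neq \clusterfin$ pick $\ell\in \clusterfin\setminus \Lambda_x(\mathcal{I})$: on the comeager set $C_\ell$ one has $\ell\in \Lambda_{x\upharpoonright\omega}(\mathcal{I})\setminus \Lambda_x(\mathcal{I})$, so the set in \eqref{eq:claimedset2} is meager. For sufficiency, assume $\Lambda_x(\mathcal{I})=\clusterfin$; since $\clusterfin$ is closed hence separable, pick a countable dense $\{\ell_m\}\subseteq \clusterfin$ and work on the comeager set $\bigcap_m C_{\ell_m}\cap \{\omega:\Lambda_{x\upharpoonright\omega}(\mathrm{Fin})=\clusterfin\}$. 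There one has $\{\ell_m\}\subseteq \Lambda_{x\upharpoonright\omega}(\mathcal{I})\subseteq \clusterfin$, and a diagonal argument that splices the witnessing subsequences of the $\ell_m$ (exploiting the structure of the admissible $\mathcal{I}$) upgrades density to the full equality $\Lambda_{x\upharpoonright\omega}(\mathcal{I})=\clusterfin=\Lambda_x(\mathcal{I})$.

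It remains to prove the key lemma, where the two ideal classes must be treated separately. Fix a decreasing neighborhood base $(U_m)$ at $\ell$ and the infinite sets $A_m:=\{i:x_i\in U_m\}$. I would select, for $\omega$ in a comeager set, integers $i_1<i_2<\cdots$ with $d_{i_j}(\omega)=1$ and $i_j\in A_j$; this already manufactures a subsequence of $x\upharpoonright\omega$ converging to $\ell$, whose positions inside $x\upharpoonright\omega$ are $k_j$ with $n_{k_j}=i_j$. The real task is to arrange $\{k_j\}\notin \mathcal{I}$. For an $F_\sigma$-ideal $\mathcal{I}=\{A:\varphi(A)<\infty\}$ with $\varphi$ a lower semicontinuous submeasure, this amounts to $\varphi(\{k_j\})=\infty$: at each stage of the construction one can force $\varphi(\{k_1,\dots,k_j\})$ to exceed an additional unit by an open-dense condition on $\omega$, and a Baire-category intersection delivers comeagerness of $C_\ell$. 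For a generalized density ideal $\mathcal{I}=\mathcal{Z}_\mu$ along a partition $(I_n)$, the requirement becomes $\limsup_n \mu_n(\{k_j\}\cap I_n)>0$, achieved by forcing, on infinitely many blocks $n$ where $\mu_n(I_n)$ is bounded away from zero, the selected $k_j$ to accumulate positive $\mu_n$-mass; again each stage is an open-dense constraint on $\omega$.

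The main obstacle, in contrast to the measure-theoretic Theorem \ref{thm:oldmeasure}, is that the generic digit sequence $(d_i(\omega))$ enjoys no frequency regularity whatsoever and oscillates between $\liminf=0$ and $\limsup=1$. The Baire-category construction must exploit this oscillation rather than resist it, by partitioning the non-$\mathcal{I}$ requirement into countably many open-dense constraints; threading these so that they remain compatible with the convergence $x_{i_j}\to \ell$, and with the two rather different combinatorial shapes of the admissible ideals, is where the real work lies.
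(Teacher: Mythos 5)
Your overall architecture matches the paper's: a key lemma asserting that $C_\ell=\{\omega:\ell\in\Lambda_{x\upharpoonright\omega}(\mathcal{I})\}$ is comeager for every $\ell\in\clusterfin$ (this is the paper's Corollary \ref{cor:limit123}, obtained there via Lemmas \ref{lemma_key_limit} and \ref{lem:identity}), the same ``only if'' argument, and a countable dense subset of $\clusterfin$ for the ``if'' direction. Your direct treatment of the $F_\sigma$ case is a legitimate alternative to the paper's one-line reduction (which invokes $\Lambda_{x\upharpoonright\omega}(\mathcal{I})=\Gamma_{x\upharpoonright\omega}(\mathcal{I})$ for $F_\sigma$-ideals and falls back on Theorem \ref{thm:main}); your open-dense construction essentially reproves that identity.

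The genuine gap is in the ``if'' direction for generalized density ideals, at the step you describe as ``a diagonal argument that splices the witnessing subsequences of the $\ell_m$ upgrades density to the full equality.'' Unlike $\Gamma_{x\upharpoonright\omega}(\mathcal{I})$, the set $\Lambda_{x\upharpoonright\omega}(\mathcal{I})$ need not be closed, so containing the dense set $\{\ell_m\}$ does not give $\clusterfin$ for free, and the naive splice fails: if $\ell_{m_j}\to\ell$ and $E_j\notin\mathcal{Z}_\mu$ witnesses $\ell_{m_j}$, then $q_j:=\limsup_n\mu_n(E_j\cap I_n)>0$ may tend to $0$ as $j\to\infty$, in which case any set spliced from finite pieces of the $E_j$ can have $\limsup_n\mu_n(\cdot\cap I_n)=0$, i.e.\ lie in $\mathcal{Z}_\mu$. (This is exactly the obstruction that makes the generalized density case harder than the $F_\sigma$ case, where ``not in $\mathcal{I}$'' means $\varphi=\infty$ and is robust under such splicing.) What is missing is a \emph{uniform} lower bound: the paper fixes $q\in(0,\limsup_n\mu_n(I_n))$ once and for all, shows that on a comeager set every $\ell_m$ lies in $\mathscr{V}_\ell(x;q)$, i.e.\ in the set $\Lambda_{x\upharpoonright\omega}(\mathcal{I};q)$ of Lemma \ref{lem:closedness}, and then uses that this $q$-level set \emph{is} closed to capture all of $\clusterfin$. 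Your key-lemma construction can in fact be made to deliver this uniform $q$ (any fixed $q<\limsup_n\mu_n(I_n)$ works, independently of $\ell$), but as written your sketch neither identifies the need for uniformity nor supplies the closedness statement that converts it into the desired equality.
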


Recalling that Erd{\H o}s--Ulam ideals are density ideals (hence, in particular, generalized density ideals), the following corollaries are immediate (we omit details):
\begin{cor}\label{cor:111}
Let $x$ be a sequence taking values in a separable metric space $X$ and let $\mathcal{I}$ be an Erd{\H o}s--Ulam ideal. Then 
the set \eqref{eq:claimedset} \textup{[}resp., the set \eqref{eq:claimedset2}\textup{]} is not meager if and only if $\Gamma_x(\mathcal{I})=\clusterfin$ \textup{[}resp., $\Lambda_x(\mathcal{I})=\clusterfin$\textup{]}. 
\end{cor}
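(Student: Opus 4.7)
The plan is to derive the corollary directly from Theorems \ref{thm:main} and \ref{thm:main2} by checking that their hypotheses are met in the setting of a separable metric space and an Erd\H{o}s--Ulam ideal. This is essentially a bookkeeping step, so the argument should be short.

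First I would verify the hypotheses on the ambient space $X$. A separable metric space is first countable (balls of rational radius around each point form a countable local base), and every subspace of a separable metric space is itself separable (being second countable, which passes to subspaces); in particular, every closed subset of $X$ is separable. Hence $X$ satisfies the structural assumption shared by Theorems \ref{thm:main} and \ref{thm:main2}.

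Next I would place the ideal $\mathcal{I}$ in the correct class. As recalled right before the corollary, Erd\H{o}s--Ulam ideals are density ideals in the sense of \cite[Section 1.13]{MR1711328}, hence generalized density ideals in the sense of \eqref{eq:Zmu}, and in particular $F_{\sigma\delta}$-ideals. Therefore $\mathcal{I}$ is $F_\sigma$-separated from its dual filter (via \cite[Corollary 1.5]{MR1758325}, as noted in the excerpt), so it meets the hypothesis of Theorem \ref{thm:main}; simultaneously, being a generalized density ideal, it meets the hypothesis of Theorem \ref{thm:main2}.

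Applying Theorem \ref{thm:main} then gives the statement for the set \eqref{eq:claimedset}, and applying Theorem \ref{thm:main2} gives the statement for \eqref{eq:claimedset2}. There is no real obstacle here: the only point worth writing explicitly in the omitted details is the double inclusion Erd\H{o}s--Ulam $\subseteq$ generalized density $\subseteq$ $F_{\sigma\delta}$, which is already quoted verbatim in the preliminaries.
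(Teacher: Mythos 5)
Your proposal is correct and follows exactly the route the paper intends: the paper declares the corollary ``immediate'' from Theorems \ref{thm:main} and \ref{thm:main2} after recalling that Erd{\H o}s--Ulam ideals are (generalized) density ideals, and your verification that a separable metric space is first countable with all closed subsets separable, together with the chain Erd{\H o}s--Ulam $\subseteq$ generalized density $\subseteq$ $F_{\sigma\delta}$ $\subseteq$ $F_\sigma$-separated, is precisely the omitted bookkeeping.
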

In this regard, for each $\alpha\ge -1$, the ideal $\mathcal{I}_\alpha$ defined in \eqref{eq:Ialpha} is an Erd{\H o}s--Ulam ideal. In particular, setting $\alpha=0$ and $X=\mathbf{R}$, we obtain the main result given in \cite{LMM}:

\begin{cor}\label{cor:realcluster}
Let $x$ be a real sequence. Then the set of its subsequences which preserve the statistical cluster points \textup{[}resp., statistical limit points\textup{]} of $x$ is comeager if and only if it is not meager if and only if every ordinary limit point of $x$ is also a statistical cluster point \textup{[}resp., statistical limit point\textup{]} of $x$.
\end{cor}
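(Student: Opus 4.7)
The plan is to recognize the statement as a direct specialization of Theorems \ref{thm:main} and \ref{thm:main2} (or, equivalently, of Corollary \ref{cor:111}) to the ideal $\mathcal{I}_0$ of asymptotic density zero sets and the space $X=\mathbf{R}$. Indeed, by definition, statistical cluster points and statistical limit points of $x$ are precisely the $\mathcal{I}_0$-cluster points and $\mathcal{I}_0$-limit points, and $\mathbf{R}$ is a separable metric space, hence first countable and with all closed subsets separable.

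The first step is to note that $\mathcal{I}_0$ is an Erd{\H o}s--Ulam ideal (taking the constant weight $f\equiv 1$ in the definition of $\varphi_f$), so in particular it is both a density ideal (a fortiori a generalized density ideal) and an $F_{\sigma\delta}$-ideal, hence $F_\sigma$-separated from its dual filter. Thus the hypotheses of Theorem \ref{thm:main} and Theorem \ref{thm:main2} are both met.

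The second step is to apply Theorem \ref{thm:main} with $\mathcal{I}=\mathcal{I}_0$: the set in \eqref{eq:claimedset} is not meager if and only if $\Gamma_x(\mathcal{I}_0)=\clusterfin$, and whenever this holds, the set is comeager. Since the inclusion $\Gamma_x(\mathcal{I}_0)\subseteq \clusterfin$ is automatic, the equality $\Gamma_x(\mathcal{I}_0)=\clusterfin$ is just the assertion that every ordinary limit point of $x$ is a statistical cluster point of $x$. This yields the triple equivalence for the cluster point part. The analogous argument using Theorem \ref{thm:main2}, together with the automatic inclusion $\Lambda_x(\mathcal{I}_0)\subseteq \clusterfin$, gives the statement for statistical limit points.

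There is essentially no technical obstacle here: the entire content of the corollary lies in the two main theorems already established, so the argument reduces to unwinding the definition of statistical (cluster/limit) points as $\mathcal{I}_0$-(cluster/limit) points and verifying that $\mathcal{I}_0$ falls into the class of ideals covered by each theorem. The only mild care is to note that the "moreover" clauses of Theorems \ref{thm:main} and \ref{thm:main2} are what upgrade "non-meager" to "comeager," making the three conditions in the statement logically equivalent rather than just a two-way biconditional.
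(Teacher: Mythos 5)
Your proposal is correct and matches the paper's intended argument: the paper derives this corollary immediately from Theorems \ref{thm:main} and \ref{thm:main2} via Corollary \ref{cor:111}, using exactly the observations you make (that $\mathcal{I}_0$ is an Erd{\H o}s--Ulam ideal, hence a generalized density ideal and an $F_{\sigma\delta}$-ideal, and that $\mathbf{R}$ satisfies the topological hypotheses). The ``moreover'' clauses upgrading non-meager to comeager are indeed the point that makes the three conditions equivalent, just as you note.
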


\section{Proof of Theorem \ref{thm:main}}

We start an easy preliminary observation:
\begin{lem}\label{lemma_inclusion}
Let $x$ be a sequence taking values in a first countable space and let $\mathcal{I}$ be an ideal. Then $\Lambda_{x \upharpoonright \omega}(\mathcal{I})\subseteq \clusterfin$ and $\Gamma_{x \upharpoonright \omega}(\mathcal{I})\subseteq \clusterfin$ for each $\omega \in (0,1]$. 
\end{lem}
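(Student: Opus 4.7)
The plan is to chain together two simple facts. First, the inclusions
\[
\Lambda_y(\mathcal{I}) \subseteq \Gamma_y(\mathcal{I}) \subseteq \Gamma_y(\mathrm{Fin}) = \mathrm{L}_y
\]
hold for every sequence $y$ in a first countable space: the leftmost inclusion was already recorded in the introduction, the middle one is the monotonicity $\Gamma_y(\mathcal{J})\subseteq \Gamma_y(\mathcal{I})$ for $\mathcal{I}\subseteq \mathcal{J}$ applied with $\mathcal{J}=\mathcal{I}$ and using $\mathrm{Fin}\subseteq \mathcal{I}$, and the final equality is the standard identification of ordinary limit points with $\mathrm{Fin}$-cluster points in first countable spaces.

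Second, I would verify the evident fact that any ordinary limit point of a subsequence of $x$ is itself an ordinary limit point of $x$. Indeed, if $y=x\upharpoonright \omega = (x_{n_k})$ and $\ell \in \mathrm{L}_y$, then by first countability there is a subsequence $(y_{k_j})$ converging to $\ell$, which corresponds to a subsequence $(x_{n_{k_j}})$ of $x$ converging to $\ell$, so $\ell \in \clusterfin$. Hence $\mathrm{L}_{x\upharpoonright \omega}\subseteq \clusterfin$ for every $\omega\in(0,1]$.

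Combining the two steps with $y = x\upharpoonright \omega$ yields
\[
\Lambda_{x\upharpoonright \omega}(\mathcal{I}) \subseteq \Gamma_{x\upharpoonright \omega}(\mathcal{I}) \subseteq \mathrm{L}_{x\upharpoonright \omega} \subseteq \clusterfin,
\]
which gives both claimed inclusions. There is no real obstacle here; the only point worth stating carefully is the use of first countability in identifying $\Gamma_y(\mathrm{Fin})$ with $\mathrm{L}_y$, since without it one would only have $\mathrm{L}_y \subseteq \Gamma_y(\mathrm{Fin})$ in general.
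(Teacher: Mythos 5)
Your proof is correct and is essentially the paper's own argument: the paper also just cites the chain $\Lambda_{x \upharpoonright \omega}(\mathcal{I}) \subseteq \Gamma_{x \upharpoonright \omega}(\mathcal{I}) \subseteq \mathrm{L}_{x \upharpoonright \omega} \subseteq \clusterfin$, and you merely spell out each link in a bit more detail.
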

\begin{proof}
It follows by $\Lambda_{x \upharpoonright \omega}(\mathcal{I}) \subseteq \Gamma_{x \upharpoonright \omega}(\mathcal{I}) \subseteq \mathrm{L}_{x \upharpoonright \omega} \subseteq \clusterfin$ for all $\omega \in (0,1]$.
\end{proof}

\begin{lem}\label{lem:fsigma}
Let $x$ be a sequence in a first countable space $X$ and let $\mathcal{I}$ be an ideal which is $F_{\sigma}$-separated from its dual filter. 
Then $\left\{\omega \in (0,1]: \ell \in \Gamma_{x \upharpoonright \omega}(\mathcal{I})\right\}$ is comeager for every $\ell \in\clusterfin$.
\end{lem}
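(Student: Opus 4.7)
The plan is to pass to a countable neighborhood basis of $\ell$ and, for each base element, exhibit inside every basic open set a witness $\omega$ whose associated indicator set is cofinite. Fix a decreasing countable neighborhood basis $(U_m)_{m \ge 1}$ of $\ell$ and put $E_m := \{i \in \mathbf{N}: x_i \in U_m\}$, which is infinite because $\ell \in \clusterfin$. For each $\omega \in (0,1]$, let $(n_k(\omega))_k$ enumerate $\{i: d_i(\omega) = 1\}$ in increasing order and set
$$
J_m(\omega) := \{k \in \mathbf{N}: x_{n_k(\omega)} \in U_m\}.
$$
By first countability, $\ell \in \Gamma_{x \upharpoonright \omega}(\mathcal{I})$ iff $J_m(\omega) \notin \mathcal{I}$ for every $m$, so since a countable intersection of comeager sets is comeager, it suffices to prove that $\{\omega: J_m(\omega) \in \mathcal{I}\}$ is meager for each fixed $m$.

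Applying the hypothesis, fix an $F_\sigma$-set $A = \bigcup_j F_j$ in $\mathcal{P}(\mathbf{N})$ (each $F_j$ closed in the Cantor topology) with $\mathcal{I} \subseteq A$ and $A \cap \mathcal{I}^\star = \emptyset$. Then
$$
\{\omega: J_m(\omega) \in \mathcal{I}\} \subseteq \bigcup_j \{\omega: J_m(\omega) \in F_j\},
$$
so it is enough to check that each $\{\omega: J_m(\omega) \in F_j\}$ is nowhere dense. Identifying $(0,1]$ with $\mathcal{P}(\mathbf{N})$ via binary expansion (a homeomorphism off the countable, hence meager, set of dyadic rationals), the map $\omega \mapsto J_m(\omega)$ is continuous: prescribing enough initial binary digits of $\omega$ fixes $n_k(\omega)$ up to any desired index, and hence fixes $J_m(\omega)$ on any initial segment. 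Consequently each $\{\omega: J_m(\omega) \in F_j\}$ is closed.

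To show its interior is empty, let $V$ be a basic open set determined by prescribing $d_1(\omega), \ldots, d_M(\omega)$. Inside $V$, pick $\omega$ by setting $d_i(\omega) = 1$ if $i \in E_m$ and $d_i(\omega) = 0$ otherwise, for all $i > M$. Since $E_m$ is infinite, $\omega$ has a non-terminating expansion, and $\{i: d_i(\omega) = 1\} \setminus E_m \subseteq [1,M]$ is finite; thus $J_m(\omega)$ is cofinite, so it lies in $\mathcal{I}^\star$ (because $\mathrm{Fin} \subseteq \mathcal{I}$) and hence outside $A \supseteq F_j$. This produces in every $V$ a point outside $\{\omega: J_m(\omega) \in F_j\}$, proving it has empty interior and completing the argument.

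The main subtlety worth flagging is the topological transfer between the Cantor-space topology on $\mathcal{P}(\mathbf{N})$ — in which continuity of $J_m$ and closedness of $F_j$ are transparent — and the Euclidean topology on $(0,1]$ used for the category statement. Since the two topologies agree off the countable, and meager, set of dyadic rationals, this transfer is harmless.
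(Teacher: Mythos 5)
Your proof is correct and follows essentially the same route as the paper's: decompose the bad set over the countable base $(U_m)$ and the closed pieces $F_j$ of the separating $F_\sigma$-set, show each piece $\{\omega: J_m(\omega)\in F_j\}$ is closed via continuity of $\omega\mapsto J_m(\omega)$ in the cylinder topology, and kill its interior by exhibiting in every basic cylinder an $\omega$ whose $U_m$-hitting set is cofinite, hence lies in $\mathcal{I}^\star$ and so outside $A\supseteq F_j$. The only cosmetic difference is that you build this witness directly from $E_m$, whereas the paper splices in a subsequence converging to $\ell$; the two witnesses serve the identical purpose.
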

\begin{proof}
If $\mathrm{L}_x=\emptyset$ there is nothing to prove. Otherwise, fix $\ell \in \clusterfin$ and let $(U_m)$ be a decreasing local base at $\ell$. Let us suppose that the ideal $\mathcal{I}$ is $F_{\sigma}$-separated from its dual filter through the set $A:=\bigcup_n A_n \subseteq \mathcal{P}(\mathbf{N})$, where each $A_n$ is closed.  Then we need to show that $S:=\{\omega \in (0,1]: \ell \notin \Gamma_{x \upharpoonright \omega}(\mathcal{I})\}$ is meager. Note that $S\subseteq \bigcup_{m\ge 1} \bigcup_{k\ge 1}S_{m,k}$, where
$$
S_{m,k}:=\left\{\omega \in (0,1]: \left\{n: (x \upharpoonright \omega)_n \in U_m\right\} \in A_k\right\}
$$
for all $m,k \in \mathbf{N}$. It is sufficient to show that each $S_{m,k}$ is nowhere dense.

We show that $S_{m,k}$ is closed. Fix $\omega_0 \in S_{m,k}^c$ (if there is no such $\omega_0$ then $S_{m,k}=(0,1]$ is closed in $(0,1]$). Since $A_k$ is closed, there exists $n_0 \in \mathbf{N}$ such that
$$
\{\omega \in (0,1]: d_n(\omega)=d_n(\omega) \text{ for all }n\le n_0\} \subseteq S_{m,k}^c.
$$
Hence $S_{m,k}$ is closed.

Finally we need to show that $S_{m,k}$ contains no non-empty open sets. Fix $\omega_1 \in (0,1]$ such that the subsequence $x \upharpoonright \omega_1$ converges to $\ell$ and let us suppose for the sake of contradiction that there exist $e_1,\ldots,e_{n_1} \in \{0,1\}$ such that $\omega \in S_{m,k}$ whenever $d_n(\omega)=e_n$ for all $n\le n_1$. Define 
$$
d_n(\omega^\star)=\begin{cases}
\text{ }e_n \hspace{4mm}& \text{ for }n\le n_1,\\
\text{ }d_n(\omega_1) &\text{ for }n > n_1.
\end{cases}
$$ 
Then $\omega^\star \in S_{m,k}$ and, on the other hand, the subsequence $x\upharpoonright \omega^\star$ converges to $\ell$, and thus $\{n: (x\upharpoonright \omega^\star)_n \in U_m\}\in \mathcal{I}^\star$, which gives the desired contradiction.
\end{proof}

Let us finally prove Theorem \ref{thm:main}. 
\begin{proof}
[Proof of Theorem \ref{thm:main}]
\textsc{If Part.} Let us suppose that $\Gamma_x(\mathcal{I})=\clusterfin$. Hence, it is claimed that the set $\left\{\omega \in (0,1]: \Gamma_{x \upharpoonright \omega}(\mathcal{I})=\clusterfin\right\}$ is comeager.

If $\clusterfin=\emptyset$, then the claim follows by Lemma \ref{lemma_inclusion}. Hence, let us suppose hereafter that $\clusterfin$ is non-empty. Since $\clusterfin$ is closed, 
there exists a non-empty countable set $\mathscr{L}$ whose closure is $\clusterfin$. Moreover, since the collection of meager sets is a $\sigma$-ideal, we get by Lemma \ref{lem:fsigma} that
$$
\mathcal{M}:=\left\{\omega \in (0,1]: \ell \notin \Gamma_{x \upharpoonright \omega}(\mathcal{I})\,\text{ for some }\,\ell \in \mathscr{L}\right\}
$$
is meager. Hence $\mathscr{L}\subseteq \Gamma_{x \upharpoonright \omega}(\mathcal{I})$ for each $\omega \in \mathcal{M}^c:=(0,1]\setminus \mathcal{M}$. 
At this point, fix $\omega \in \mathcal{M}^c$. It follows 
that $\Gamma_{x \upharpoonright \omega}(\mathcal{I})$ contains also the closure of $\mathscr{L}$, i.e., $\clusterfin$. On the other hand, $\Gamma_{x \upharpoonright \omega}(\mathcal{I})\subseteq \clusterfin$ by Lemma \ref{lemma_inclusion}. Therefore $\Gamma_{x \upharpoonright \omega}(\mathcal{I})=\clusterfin$ for each $\omega \in \mathcal{M}^c$.

\vspace{4mm}

\textsc{Only If Part.} Let us suppose that $\Gamma_x(\mathcal{I}) \neq \clusterfin$ so that 
there exists a point $\ell \in \clusterfin\setminus \Gamma_x(\mathcal{I})$. Therefore, the set of all $\omega \in (0,1]$ such that $\Gamma_{x \upharpoonright \omega}(\mathcal{I})=\Gamma_x(\mathcal{I})$ is contained in $\left\{\omega \in (0,1]: \ell \notin \Gamma_{x \upharpoonright \omega}(\mathcal{I})\right\}$ which, thanks to Lemma \ref{lem:fsigma}, is a meager set.
\end{proof}

\section{Proof of Theorem \ref{thm:main2}}
We proceed with some technical lemmas (for the case of generalized density ideals):
\begin{lem}\label{lemma_key_limit}
Let $x$ be a sequence taking values in a first countable space $X$, let $\mathcal{I}$ be a generalized density ideal such that $\mathcal{I}=\mathcal{Z}_\mu$ as in \eqref{eq:Zmu}, and fix 
$q \in (0,\limsup_{n\to \infty}\mu_n(I_n))$. Fix also $\ell \in X$ and a decreasing local base $(U_m)$ at $\ell$. Then, the set
$$
\mathscr{V}_\ell=\mathscr{V}_\ell(x; q):=\left\{\omega \in (0,1]: \limsup_{n\to \infty}\mu_n(A_{\omega,m} \cap I_n)\ge q \text{ for all }m\right\},
$$
where $A_{\omega,m}:=\{k: x_{n_k} \in U_m\}$ and $(x_{n_k}):=x\upharpoonright \omega$, is either comeager or empty.
\end{lem}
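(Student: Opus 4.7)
The plan is to suppose $\mathscr{V}_\ell \neq \emptyset$ (otherwise the claim is trivial), fix some $\omega_0 \in \mathscr{V}_\ell$, and establish that $\mathscr{V}_\ell$ is comeager by writing its complement as a countable union of nowhere-dense closed sets. Concretely, I would set
$$
\mathscr{V}_\ell^c = \bigcup_{m \ge 1,\, r \in \mathbf{Q} \cap [0,q),\, N \ge 1} E_{m,r,N}, \quad \text{where} \quad E_{m,r,N} := \{\omega \in (0,1] : \mu_n(A_{\omega,m} \cap I_n) \le r \,\text{ for all }\, n \ge N\},
$$
reducing the problem to showing that each $E_{m,r,N}$ is nowhere dense.

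For the closedness of $E_{m,r,N}$, I would rely on the fact that, for every fixed $n$, the set $A_{\omega,m} \cap I_n$ is determined by finitely many digits of $\omega$: namely by the positions of the first $\max I_n$ ones in the dyadic expansion of $\omega$. Consequently $\omega \mapsto \mu_n(A_{\omega,m} \cap I_n)$ is locally constant, each single-$n$ constraint $\{\omega : \mu_n(A_{\omega,m} \cap I_n) \le r\}$ is clopen, and $E_{m,r,N}$ is a countable intersection of clopen sets.

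The empty-interior step I would handle by contradiction: supposing some basic cylinder $[s]$ determined by a finite string $s = (e_1,\ldots,e_K) \in \{0,1\}^K$ satisfies $[s] \subseteq E_{m,r,N}$, I would produce $\omega^\star \in [s]$ escaping $E_{m,r,N}$. Since $\omega_0 \in \mathscr{V}_\ell$ and $r < q$, infinitely many $\nu \ge N$ satisfy $\mu_\nu(A_{\omega_0,m} \cap I_\nu) > r$. The construction of $\omega^\star$ grafts $\omega_0$'s tail onto the prefix $s$ with a count-matching middle segment: set $d_i(\omega^\star) := e_i$ for $i \le K$, pick $K' > K$ large, choose $d_i(\omega^\star)$ for $K < i \le K'$ so that $|B^\star \cap [1,K']| = |B_0 \cap [1,K']|$ (where $B_0$ and $B^\star$ denote the supports of the ones in $\omega_0$ and $\omega^\star$), and set $d_i(\omega^\star) := d_i(\omega_0)$ for $i > K'$. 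Matching 1-counts forces the enumerations to satisfy $n_k^{\omega^\star} = n_k^{\omega_0}$ for every $k > |B_0 \cap [1,K']|$, so selecting $\nu$ additionally with $\min I_\nu > |B_0 \cap [1,K']|$ yields $A_{\omega^\star,m} \cap I_\nu = A_{\omega_0,m} \cap I_\nu$, and hence $\mu_\nu(A_{\omega^\star,m} \cap I_\nu) > r$, contradicting $\omega^\star \in E_{m,r,N}$.

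The main technical obstacle will be ensuring feasibility of the count-adjustment in the middle window $(K,K']$: one needs $\omega_0$ to supply at least $(b-a)^+$ zeros and at least $(a-b)^+$ ones in $(K,K']$, where $a := |\{i \le K : e_i = 1\}|$ and $b := |B_0 \cap [1,K]|$. This is automatic whenever $\omega_0$ has infinitely many zeros and infinitely many ones, i.e., whenever $\omega_0$ is not a dyadic rational; the remaining degenerate case can be dispatched by a small separate argument, for instance by first perturbing $\omega_0$ on a sparse set to extract a non-dyadic element of $\mathscr{V}_\ell$, or by allowing $\omega^\star$'s tail to deviate from $\omega_0$'s at controlled positions.
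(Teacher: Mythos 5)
Your argument follows the same skeleton as the paper's proof: write $\mathscr{V}_\ell^c$ as a countable union of ``$\nu_{\omega,m}(n)\le r$ eventually'' sets and show each is nowhere dense by splicing a suitable tail onto an arbitrary finite prefix. The splice itself is organized differently, though, and the difference matters exactly at the point you flag. The paper does not graft the tail of the given $\omega_0\in\mathscr{V}_\ell$: it first notes that $\mathscr{V}_\ell\neq\emptyset$ forces every $A_{\omega_0,m}$ to be infinite (else $\mu_n(A_{\omega_0,m}\cap I_n)=0$ eventually), hence $\ell\in\mathrm{L}_x$, and then grafts the tail of a witness $\omega_1$ with $x\upharpoonright\omega_1\to\ell$. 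Any such splice $x\upharpoonright\omega^\star$ still converges to $\ell$, so $A_{\omega^\star,m}$ is cofinite and $\mu_\nu(A_{\omega^\star,m}\cap I_\nu)=\mu_\nu(I_\nu)$ for all large $\nu$, with $\limsup_\nu\mu_\nu(I_\nu)>q$; no index bookkeeping is needed. Your count-matching middle segment is a legitimate, and in fact more careful, substitute that works for an arbitrary graft source: the verification that $\min I_\nu>|B_0\cap[1,K']|$ yields $A_{\omega^\star,m}\cap I_\nu=A_{\omega_0,m}\cap I_\nu$ is correct, and only finitely many $I_\nu$ meet $[1,|B_0\cap[1,K']|]$, so suitable $\nu\ge N$ exist.

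The one genuine problem is your handling of the degenerate case, and neither suggested patch works for arbitrary submeasures $\mu_n$. Both ``perturbing $\omega_0$ on a sparse set'' and ``letting the tail of $\omega^\star$ deviate at controlled positions'' amount to shifting the index set $A_{\cdot,m}$ by a nonzero amount, and since $\mu_\nu$ concentrates on $I_\nu$ and is otherwise arbitrary (it may be a point mass), even a shift by $1$ can destroy $\limsup_\nu\mu_\nu(\cdot\cap I_\nu)\ge q$. The clean repair is the paper's observation: from $\mathscr{V}_\ell\neq\emptyset$ deduce $\ell\in\mathrm{L}_x$, choose a convergence witness $\omega_1$ thinned so that it has infinitely many zeros (hence non-dyadic), and note that $\omega_1\in\mathscr{V}_\ell(x;q)$ automatically because $A_{\omega_1,m}$ is cofinite and $q<\limsup_n\mu_n(I_n)$; running your construction with $\omega_1$ in place of $\omega_0$ makes the count-matching always feasible. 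With that substitution the proof is complete. (A cosmetic point you share with the paper: the cylinders $\left(a,a+2^{-K}\right]$ are not literally clopen in $(0,1]$, so the closedness/nowhere-density step should be phrased modulo the countable set of dyadic endpoints; your constructed $\omega^\star$ is non-dyadic, so this causes no harm.)
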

\begin{proof}
Let us suppose $\mathscr{V}_\ell$ is non-empty, so that, in particular, $\ell \in \clusterfin$. Then, it is claimed that $\mathscr{V}_\ell^c$ is meager. 
%
For each $m,n \in \mathbf{N}$ and $\omega \in (0,1]$ set also $\nu_{\omega,m}(n):=\mu_n(A_{\omega,m}\cap I_n)$ to ease the notation. It follows that 
\begin{displaymath}
\begin{split}
\mathscr{V}_\ell^c &\textstyle =\left(\bigcap_{m\ge 1}\bigcap_{j\ge 1}\left\{\omega: \nu_{\omega,m}(n) \ge q\left(1-2^{-j}\right) \text{ for infinitely many }n\right\}\right)^c \\
&\textstyle =\bigcup_{m\ge 1}\bigcup_{j\ge 1}\left\{\omega: \nu_{\omega,m}(n) < q\left(1-2^{-j}\right) \text{ for all sufficiently large }n\right\}\\
&\textstyle =\bigcup_{m\ge 1}\bigcup_{j\ge 1}\bigcup_{t\ge 1} \bigcap_{s\ge t}\left\{\omega: \nu_{\omega,m}(s) < q\left(1-2^{-j}\right)\right\}.
\end{split}
\end{displaymath}
Hence, it is sufficient to show that, for every $q\in (0,\limsup_{n\to \infty}\mu_n(I_n))$, each set 
$
\textstyle B_{m,t}:=\bigcap_{s\ge t}\,\left\{\omega: \nu_{\omega,m}(s) < q\right\}
$ 
is nowhere dense: indeed, this would imply that $\mathscr{V}_\ell$ is comeager.

Equivalently, let us prove that, for each fixed $m,t \in \mathbf{N}$, every non-empty open interval $(a,b)\subseteq (0,1)$ contains a non-empty interval disjoint to $B_{m,t}$. Fix $\omega_0 \in (a,b)$ with finite dyadic representation $\sum_{i=1}^r 2^{-h_i}$ such that $\omega_0+2^{-h_r}<b$. Moreover, since $\ell \in \clusterfin$, there exists $\omega_1 \in (0,1]$ such that $x \upharpoonright \omega_1 \to \ell$, hence 
$$
\limsup_{s\to \infty}\nu_{\omega_1,m}(s)=\limsup_{n\to \infty}\mu_n(I_n)>0.
$$
It follows that there exists an integer $s_\star>\max(t,h_r)$ such that $d_{s_\star}(\omega_1)=1$ and 
$
\nu_{\omega_\star,m}(s_\star) \ge q,
$ 
where $\omega_\star:=\omega_0+\sum_{h_r<i\le s_\star}d_i(\omega_1)/2^i$. Therefore, each $\omega \in \left(\omega_\star,\omega_\star+2^{-s_\star}\right)$ starts with the same binary representation of $\omega_\star$, so that $\nu_{\omega,m}(s_\star) \ge q$ and, in particular, does not belong to $B_{m,t}$. This concludes the proof since $\left(\omega_\star,\omega_\star+2^{-s_\star}\right) \subseteq \left(\omega_0,\omega_0+2^{-h_r}\right)$ which, in turn, is contained in $(a,b)$.
\end{proof}

\begin{lem}\label{lem:identity}
With the same notation of Lemma \ref{lemma_key_limit}, it holds
$$
\textstyle \left\{\omega \in (0,1]: \ell \in \Lambda_{x \upharpoonright \omega}(\mathcal{I})\right\}=\bigcup_{q>0} \mathscr{V}_\ell(x; q).
$$
\end{lem}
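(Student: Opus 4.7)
My plan is to prove the equality by a double inclusion, after unwinding the definition of $\mathcal{I}$-limit point when $\mathcal{I}=\mathcal{Z}_\mu$. Writing $y=x\upharpoonright \omega=(x_{n_k})$, the condition $\ell\in\Lambda_y(\mathcal{I})$ says exactly that there is a set $T\subseteq\mathbf{N}$ with $T\notin\mathcal{Z}_\mu$ (equivalently $\limsup_n\mu_n(T\cap I_n)>0$) such that the sub-subsequence $(y_k)_{k\in T}$ converges to $\ell$. Using the decreasing local base $(U_m)$, convergence of $(y_k)_{k\in T}$ to $\ell$ is equivalent to $T\setminus A_{\omega,m}$ being finite for every $m$. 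A further useful remark, coming from the monotonicity of $(U_m)$, is that $A_{\omega,1}\supseteq A_{\omega,2}\supseteq\cdots$ form a decreasing chain.

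For the inclusion $\subseteq$, I take a witness $T$ for $\ell\in\Lambda_{x\upharpoonright\omega}(\mathcal{I})$. Since each $T\setminus A_{\omega,m}$ is finite and the blocks $I_n$ are pairwise disjoint, $T\setminus A_{\omega,m}$ meets only finitely many $I_n$; hence $\mu_n(T\cap I_n)\le\mu_n(A_{\omega,m}\cap I_n)$ for all sufficiently large $n$. Consequently
\[
\limsup_{n\to\infty}\mu_n(A_{\omega,m}\cap I_n)\,\ge\,\limsup_{n\to\infty}\mu_n(T\cap I_n)=:q_0,
\]
and $q_0>0$ because $T\notin\mathcal{Z}_\mu$. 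Taking $q:=\min\{q_0,\tfrac{1}{2}\limsup_n\mu_n(I_n)\}$ gives a valid parameter in the allowed range with $\omega\in\mathscr{V}_\ell(x;q)$.

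For the inclusion $\supseteq$ (the step with actual content), given $\omega\in\mathscr{V}_\ell(x;q)$ I will build a single set $T$ witnessing $\ell\in\Lambda_{x\upharpoonright\omega}(\mathcal{I})$ by a diagonal argument. Recursively choose integers $n_1<n_2<\cdots$ so that $\mu_{n_j}(A_{\omega,j}\cap I_{n_j})\ge q/2$ for every $j$; this is possible because $\limsup_n\mu_n(A_{\omega,j}\cap I_n)\ge q$ for each $j$. Set $T:=\bigcup_j(A_{\omega,j}\cap I_{n_j})$. Thanks to the nestedness $A_{\omega,j}\subseteq A_{\omega,m}$ for $j\ge m$, one has $T\setminus A_{\omega,m}\subseteq\bigcup_{j<m}I_{n_j}$, which is finite; hence $(y_k)_{k\in T}$ converges to $\ell$. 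Moreover, $\mu_{n_j}(T\cap I_{n_j})\ge\mu_{n_j}(A_{\omega,j}\cap I_{n_j})\ge q/2$ shows $T\notin\mathcal{Z}_\mu$, as required.

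The only nontrivial step is the diagonal construction of $T$ in the $\supseteq$ direction, but the nested structure of $(A_{\omega,m})_m$ coming from the local base makes it routine: I never need to take an actual intersection over $m$, only peel off an appropriate slice from each $A_{\omega,m}$ and take the union. I do not anticipate any technical obstacle beyond the verification, carried out above, that the finite set $T\setminus A_{\omega,m}$ genuinely lives in only finitely many of the disjoint blocks $I_n$ so that it does not inflate $\limsup_n\mu_n(T\cap I_n)$.
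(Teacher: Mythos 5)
Your proof is correct and follows essentially the same route as the paper's: the forward inclusion rests on the same observation that a finite modification of the index set lives in finitely many blocks $I_n$ and so does not affect $\limsup_n\mu_n(\,\cdot\cap I_n)$, and the reverse inclusion is the same diagonal construction, peeling off one block $A_{\omega,m}\cap I_{n_m}$ per neighborhood index (the paper uses bounds $q(1-2^{-r})$ where you use the uniform $q/2$, an immaterial difference). No gaps.
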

\begin{proof}
Let us fix $\omega \in (0,1]$ such that $\ell \in \Lambda_{x\upharpoonright \omega}(\mathcal{I})$, i.e., there exist $\eta \in (0,1]$ and $q>0$ such that the subsequence $(x\upharpoonright \omega) \upharpoonright \eta \to \ell$ and $\limsup_{j}\mu_j(\{k_t: t \in \mathbf{N}\}\cap I_j) \ge q$, where we denote by $(x_{n_k})$ and $(x_{n_{k_t}})$ the subsequences $x\upharpoonright \omega$ and $(x\upharpoonright \omega) \upharpoonright \eta$, respectively. Then, for each $m \in \mathbf{N}$ there is a finite set $F \in \mathrm{Fin}$ such that
\begin{displaymath}
\begin{split}
\limsup_{j\to \infty}\mu_j(\{k: x_{n_k} \in U_m\}\cap I_j) &\ge \limsup_{j\to \infty}\mu_j(\{k_t: x_{n_{k_t}} \in U_m \text{ and }t \in \mathbf{N}\}\cap I_j)\\
&= \limsup_{j\to \infty}\mu_j((\{k_t: t \in \mathbf{N}\}\setminus F)\cap I_j)\\
&= \limsup_{j\to \infty}\mu_j(\{k_t: t \in \mathbf{N}\}\cap I_j) \ge q.
\end{split}
\end{displaymath}
This implies that $\omega \in \mathscr{V}_\ell(x;q)$.

Conversely, let us fix $\omega \in \mathscr{V}_\ell(x;q)$ for some $q>0$, that is, $\limsup_{j}\mu_j(\{k: x_{n_k} \in U_m\}\cap I_j) \ge q$ for all $m$. Hence, for each $m \in \mathbf{N}$, there exists an increasing sequence $(j_{m,r})$ of positive integers such that
$$
\textstyle \mu_{j_{m,r}}\left(\{k: x_{n_k} \in U_m\}\cap I_{j_{m,r}}\right)\ge q\left(1-\frac{1}{2^r}\right)
$$
for all $r$. Define the increasing sequence $(r_m)$ of positive integers such that $r_1:=1$ and, recursively, $r_{m+1}$ is the smallest integer $r>r_m$ for which $j_{m+1,r}>j_{m,r_m}$. 
At this point, define the subsequence $(x_{n_{k_t}})$ of $(x_{n_k})$ by picking the index $k$ if and only if there exists $m \in \mathbf{N}$ for which $x_{n_k} \in U_{j_{m,r_m}}$ and $k \in I_{j_{m,r_m}}$. 
It follows by construction that the subsequence $(x_{n_{k_t}})$ is convergent to $\ell$ and that
\begin{displaymath}
\textstyle \mu_{j_{m,r_m}}\left(\{k_t: t \in \mathbf{N}\}\right)=\mu_{j_{m,r_m}}\left(\{k: x_{n_k} \in U_{j_{m,r_m}}\}\cap I_{j_{m,r_m}} \right) \ge q\left(1-\frac{1}{2^{r_m}}\right) 
\end{displaymath}
for all $m \in \mathbf{N}$, that is, $\{k_t: t \in \mathbf{N}\} \notin \mathcal{Z}_\mu$ and $\ell \in \Lambda_{x \upharpoonright \omega}(\mathcal{I})$.
\end{proof}


\begin{cor}\label{cor:limit123}
Let $x$ be a sequence taking values in a first countable space $X$ and let $\mathcal{I}$ be a generalized density ideal. Then $\left\{\omega \in (0,1]: \ell \in \Lambda_{x \upharpoonright \omega}(\mathcal{I})\right\}$ is comeager for every $\ell \in\clusterfin$.
\end{cor}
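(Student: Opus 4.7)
The plan is to deduce Corollary \ref{cor:limit123} directly from the two previous lemmas. By Lemma \ref{lem:identity}, the target set equals $\bigcup_{q>0}\mathscr{V}_\ell(x;q)$, and by Lemma \ref{lemma_key_limit}, for every admissible $q \in (0,\limsup_n \mu_n(I_n))$, the set $\mathscr{V}_\ell(x;q)$ is either comeager or empty. Hence it suffices to exhibit a single $q>0$ for which $\mathscr{V}_\ell(x;q)$ is non-empty; the union is then comeager, and so is the target set.

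The first step is to note that the defining requirement of a generalized density ideal gives $Q:=\limsup_n \mu_n(I_n)>0$, so we may fix any $q\in (0,Q)$. The second step is to produce an explicit witness in $\mathscr{V}_\ell(x;q)$. Since $\ell\in \clusterfin$ and $X$ is first countable, there exists $\omega_1\in (0,1]$ such that $x\upharpoonright \omega_1$ converges to $\ell$. Denoting $(x_{n_k}):=x\upharpoonright \omega_1$, for every $m$ the set $A_{\omega_1,m}=\{k: x_{n_k}\in U_m\}$ is cofinite in $\mathbf{N}$. Because $(I_n)$ is a partition into finite sets, the finite complement of $A_{\omega_1,m}$ meets only finitely many $I_n$, and therefore $\mu_n(A_{\omega_1,m}\cap I_n)=\mu_n(I_n)$ for all sufficiently large $n$.

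Consequently $\limsup_{n\to\infty}\mu_n(A_{\omega_1,m}\cap I_n)=Q\ge q$ for every $m\in\mathbf{N}$, which shows that $\omega_1\in \mathscr{V}_\ell(x;q)$. Invoking Lemma \ref{lemma_key_limit}, $\mathscr{V}_\ell(x;q)$ is comeager, and via Lemma \ref{lem:identity} this comeager set is contained in $\{\omega\in(0,1]:\ell\in \Lambda_{x\upharpoonright\omega}(\mathcal{I})\}$, completing the argument.

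There is essentially no obstacle: all the analytic work has already been absorbed into Lemmas \ref{lemma_key_limit} and \ref{lem:identity}. The only point that requires any care is checking that the ``convergent'' witness $\omega_1$ really falls into $\mathscr{V}_\ell(x;q)$, and this is an immediate consequence of the partition structure of $(I_n)$ together with the lower-semicontinuity-style equality $\mu_n(A_{\omega_1,m}\cap I_n)=\mu_n(I_n)$ eventually in $n$.
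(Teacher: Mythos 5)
Your proposal is correct and follows essentially the same route as the paper: exhibit a witness $\omega_1$ with $x\upharpoonright\omega_1\to\ell$ lying in $\mathscr{V}_\ell(x;q)$ for some admissible $q$, then invoke Lemma \ref{lemma_key_limit} for comeagerness and Lemma \ref{lem:identity} for the inclusion into the target set. The only difference is that you spell out the (correct) verification that $\omega_1\in\mathscr{V}_\ell(x;q)$ via the cofiniteness of $A_{\omega_1,m}$, which the paper leaves implicit.
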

\begin{proof}
Fix $\ell \in \clusterfin$, otherwise there is nothing to prove. Then, there exists $\omega_0 \in (0,1]$ such that $x\upharpoonright \omega_0 \to \ell$. Hence, given $q_0 \in (0,\limsup_{n\to \infty}\mu_n(I_n))$, the set $\mathscr{V}_\ell(x; q_0)$ contains $\omega_0$; in particular, it is non-empty and, thanks to Lemma \ref{lemma_key_limit}, it is comeager. Lastly, the claim follows by the fact that, thanks to Lemma \ref{lem:identity}, the inclusion $\mathscr{V}_\ell(x; q_0)\subseteq \left\{\omega \in (0,1]: \ell \in \Lambda_{x \upharpoonright \omega}(\mathcal{I})\right\}$ holds.
\end{proof}

Lastly, we show that a certain subset of $\mathcal{I}$-limit points $\ell \in X$ is closed.
\begin{lem}\label{lem:closedness}
With the same notation of Lemma \ref{lemma_key_limit}, the set
$$
\textstyle \Lambda_x(\mathcal{I};q):=\left\{\ell \in X: \limsup_{j\to \infty}\mu_j\left(\{n: x_n \in U_m\}\right)\ge q \text{ for all }m\right\}
$$
is closed for each $q \in \mathbf{R}$. 
\end{lem}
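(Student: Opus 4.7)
The plan is to prove the complementary statement: any limit of a convergent sequence of points in $\Lambda_x(\mathcal{I};q)$ still belongs to $\Lambda_x(\mathcal{I};q)$. Since $X$ is first countable, this is equivalent to closedness.

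First, I would make the following preliminary observation, which makes the definition of $\Lambda_x(\mathcal{I};q)$ independent of the particular local base chosen at $\ell$: a point $\ell \in X$ belongs to $\Lambda_x(\mathcal{I};q)$ if and only if
$$
\limsup_{j\to\infty}\mu_j(\{n: x_n \in U\})\ge q
$$
for \emph{every} open neighborhood $U$ of $\ell$. The ``if'' direction is trivial by specialising to $U=U_m$; the ``only if'' direction follows from the fact that any open neighborhood $U$ of $\ell$ contains some $U_m$, combined with monotonicity of each submeasure $\mu_j$, which yields $\mu_j(\{n: x_n \in U_m\}) \le \mu_j(\{n: x_n \in U\})$ and hence the same inequality for the $\limsup$.

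Next, let $\ell \in \overline{\Lambda_x(\mathcal{I};q)}$. Since $X$ is first countable, choose a sequence $(\ell_k)$ in $\Lambda_x(\mathcal{I};q)$ with $\ell_k \to \ell$, and let $(U_m)$ be a decreasing local base at $\ell$. Fix $m \in \mathbf{N}$. Because $U_m$ is an open neighborhood of $\ell$ and $\ell_k \to \ell$, there exists $k$ with $\ell_k \in U_m$. Since $U_m$ is open, it is an open neighborhood of $\ell_k$, and by the equivalent characterisation above applied to $\ell_k \in \Lambda_x(\mathcal{I};q)$, we obtain $\limsup_{j\to\infty}\mu_j(\{n: x_n \in U_m\}) \ge q$. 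As $m$ was arbitrary, $\ell \in \Lambda_x(\mathcal{I};q)$, proving closedness.

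I do not foresee a serious obstacle here: the argument is essentially the standard ``closedness of cluster-type sets'' proof, and the only subtlety is the base-independence remark in the first step, which is needed because nominally the defining inequality is stated only against a fixed local base at $\ell$, whereas after taking a sequence $\ell_k \to \ell$ we want to reuse the inequality at the points $\ell_k$ for neighborhoods that came from $\ell$'s base, not from $\ell_k$'s.
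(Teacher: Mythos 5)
Your proof is correct and rests on the same key observation as the paper's: monotonicity of the submeasures lets the $\limsup$ inequality transfer between a basic neighborhood of one point and an open neighborhood of a nearby point, which also disposes of the base-dependence issue you flag. The paper phrases the argument as openness of the complement (exhibiting a whole basic neighborhood $U_{m_0}$ of $\ell$ inside the complement) rather than via sequential closedness in a first countable space, but the two formulations are essentially the same proof.
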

\begin{proof}
Equivalently, we have to prove that the set
$$
\textstyle G:=\left\{\ell \in X: \limsup_{j\to \infty}\mu_j\left(\{n: x_n \in U_m\}\right)< q \text{ for some }m\right\}
$$
is open for each $q$. This is obvious if $G$ is empty. Otherwise, let us fix $\ell \in G$ and let $(U_m)$ be a decreasing local base at $\ell$. Then, there exists $m_0 \in \mathbf{N}$ such that $\limsup_j \mu_j(\{n: x_n \in U_m\} \cap I_j)<q$ for all $m\ge m_0$. Fix $\ell^\prime \in U_{m_0}$ and let $(V_m)$ a decreasing local base at $\ell^\prime$. Fix also $m_1 \in \mathbf{N}$ such that $V_{m_1} \subseteq U_{m_0}$. It follows by monotonicity that
$$
\limsup_{j\to \infty}\mu_j(\{n: x_n \in V_m\}\cap I_j) \le \limsup_{j\to \infty}\mu_j(\{n: x_n \in U_{m_0}\}\cap I_j)<q
$$
for every $m\ge m_1$. In particular, since $\ell^\prime$ has been arbitrarily fixed, $U_{m_0} \subseteq G$.
\end{proof}

\begin{proof}
[Proof of Theorem \ref{thm:main2}]
If $\mathcal{I}$ is an $F_\sigma$-ideal, then the claim follows by Theorem \ref{thm:main}. Indeed, thanks to \cite[Theorem 2.3]{PaoloMarek17}, we have $\Lambda_{x\upharpoonright \omega}(\mathcal{I})=\Gamma_{x\upharpoonright \omega}(\mathcal{I})$ for all $\omega \in (0,1]$. Hence, let us assume hereafter that $\mathcal{I}$ is a generalized density ideal.

\vspace{4mm}

\textsc{If Part.} With the same notation of the above proof, suppose that $\Lambda_x(\mathcal{I})=\clusterfin$ and, similarly, assume that $\clusterfin\neq \emptyset$. For each $\ell \in \clusterfin$, there exists $\omega_\ell \in (0,1]$ such that $x \upharpoonright \omega_\ell \to \ell$ and, in particular, $\ell \in \Lambda_{x \upharpoonright \omega_\ell}(\mathcal{I})$. Hence, for each fixed $q \in (0,\limsup_{n\to \infty}\mu_n(I_n))$, the set $\left\{\omega: \ell \in \Lambda_{x\upharpoonright \omega}(\mathcal{I};q)\right\}$ is non-empty. Moreover, note that
$$
\mathscr{V}_\ell(x;q)=\left\{\omega \in (0,1]: \ell \in \Lambda_{x\upharpoonright \omega}(\mathcal{I};q)\right\}.
$$
Thus, it follows by 
Corollary \ref{cor:limit123} 
that $\left\{\omega: \ell \notin \Lambda_{x\upharpoonright \omega}(\mathcal{I};q)\right\}$ is meager. Therefore, denoting by $\mathscr{L}$ a non-empty countable set with closure $\clusterfin$, we obtain that also $\mathcal{N}:=\left\{\omega: \ell \notin \Lambda_{x\upharpoonright \omega}(\mathcal{I};q) \text{ for some }\ell \in \mathscr{L}\right\}$ is meager, that is,
$$
\textstyle \mathcal{N}^c=\left\{\omega\in (0,1]: \mathscr{L}\subseteq \Lambda_{x\upharpoonright \omega}(\mathcal{I};q)\right\}
$$
is comeager. At this point, for each $\omega \in \mathcal{N}^c$, it follows 
by Lemma \ref{lem:closedness} 
that $\Lambda_{x\upharpoonright \omega}(\mathcal{I};q)$ contains also the closure of $\mathscr{L}$, i.e., $\clusterfin$. On the other hand, $\Lambda_{x\upharpoonright \omega}(\mathcal{I};q)\subseteq \Lambda_{x \upharpoonright \omega}(\mathcal{I})\subseteq \clusterfin$ by Lemma \ref{lemma_inclusion}. Therefore $\Lambda_{x \upharpoonright \omega}(\mathcal{I})=\clusterfin$ for each $\omega \in \mathcal{N}^c$.

\vspace{4mm}

\textsc{Only If Part.} This part goes verbatim as in the \emph{only if} part of the proof of Theorem \ref{thm:main} (using Corollary \ref{cor:limit123}).
\end{proof}

\section{Concluding Remarks}
It follows by Theorem \ref{thm:oldmeasure} that, for each $\alpha \ge -1$, $\left\{\omega: \Lambda_{x\upharpoonright \omega}(\mathcal{I}_\alpha)=\Gamma_{x\upharpoonright \omega}(\mathcal{I}_\alpha)\right\}$ has full Lebesgue measure if and only if $\Lambda_x(\mathcal{I}_\alpha)=\Gamma_x(\mathcal{I}_\alpha)$, cf. also \cite[Corollaries 2.4 and 4.5]{Leo17b}. 
On the other hand, its topological analogue is quite different. Indeed, we conclude with the following corollary, which follows from the proofs of the main results:
\begin{cor}\label{cor:final}
With the same hypotheses of Theorem \ref{thm:main2}, the sets
$$
\{\omega \in (0,1]: \Gamma_{x\upharpoonright \omega}(\mathcal{I})=\clusterfin\}\,\,\,\text{ and }\,\,\,\{\omega \in (0,1]: \Lambda_{x\upharpoonright \omega}(\mathcal{I})=\clusterfin\}
$$
are comeager. In particular, the set $\{\omega \in (0,1]: \Gamma_{x\upharpoonright \omega}(\mathcal{I})=\Lambda_{x\upharpoonright \omega}(\mathcal{I})\}$ is comeager.
\end{cor}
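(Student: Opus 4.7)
The plan is to observe that Corollary \ref{cor:final} is essentially a free by-product of the \emph{If} parts of Theorems \ref{thm:main} and \ref{thm:main2}. Inspecting those proofs, the running hypothesis $\Gamma_x(\mathcal{I})=\clusterfin$ (resp.\ $\Lambda_x(\mathcal{I})=\clusterfin$) is invoked only to produce, for each relevant $\ell$, a witness $\omega_\ell \in (0,1]$ with $x \upharpoonright \omega_\ell \to \ell$. But such a witness exists for every $\ell \in \clusterfin$ by the very definition of ordinary limit point. Consequently, the same arguments prove that the two sets
$\{\omega : \Gamma_{x\upharpoonright \omega}(\mathcal{I}) = \clusterfin\}$ and $\{\omega : \Lambda_{x\upharpoonright \omega}(\mathcal{I}) = \clusterfin\}$
are comeager without any a priori relation between $\Gamma_x(\mathcal{I})$, $\Lambda_x(\mathcal{I})$ and $\clusterfin$.

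Concretely, by Lemma \ref{lemma_inclusion} one always has $\Lambda_{x\upharpoonright \omega}(\mathcal{I}) \subseteq \Gamma_{x\upharpoonright \omega}(\mathcal{I}) \subseteq \clusterfin$; if $\clusterfin = \emptyset$ the corollary is trivial, so fix a countable set $\mathscr{L}$ dense in the non-empty, closed, and (by hypothesis) separable set $\clusterfin$. For the $\Gamma$-statement, apply Lemma \ref{lem:fsigma} in the $F_\sigma$ case (noting that any $F_\sigma$-ideal is itself $F_\sigma$-separated from its dual filter), and apply Corollary \ref{cor:limit123} combined with the inclusion $\Lambda_{x\upharpoonright \omega}(\mathcal{I})\subseteq \Gamma_{x\upharpoonright \omega}(\mathcal{I})$ in the generalized density case, to obtain that $\{\omega: \ell \in \Gamma_{x\upharpoonright \omega}(\mathcal{I})\}$ is comeager for every $\ell \in \clusterfin$. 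Intersecting over the countable set $\mathscr{L}$ yields a comeager set on which $\mathscr{L} \subseteq \Gamma_{x\upharpoonright \omega}(\mathcal{I})$; since $\Gamma_{x\upharpoonright \omega}(\mathcal{I})$ is closed, it then contains $\overline{\mathscr{L}} = \clusterfin$, and the reverse inclusion from Lemma \ref{lemma_inclusion} gives equality.

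For the $\Lambda$-statement, the $F_\sigma$ case reduces to the previous one via \cite[Theorem 2.3]{PaoloMarek17}, which yields $\Lambda_{x\upharpoonright \omega}(\mathcal{I}) = \Gamma_{x\upharpoonright \omega}(\mathcal{I})$. In the generalized density case $\Lambda_{x\upharpoonright \omega}(\mathcal{I})$ need not be closed, and I would instead work with the auxiliary set $\Lambda_{x\upharpoonright \omega}(\mathcal{I};q)$ of Lemma \ref{lem:closedness} for some fixed $q \in (0, \limsup_n \mu_n(I_n))$, using the identity $\{\omega: \ell \in \Lambda_{x\upharpoonright \omega}(\mathcal{I};q)\} = \mathscr{V}_\ell(x;q)$ already isolated in the proof of Theorem \ref{thm:main2}. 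For each $\ell \in \clusterfin$, any $\omega_\ell$ with $x \upharpoonright \omega_\ell \to \ell$ lies in $\mathscr{V}_\ell(x;q)$, so Lemma \ref{lemma_key_limit} makes this set comeager. Intersecting over $\ell \in \mathscr{L}$ and using that $\Lambda_{x\upharpoonright \omega}(\mathcal{I};q)$ is closed to pass from $\mathscr{L}$ to its closure produces, comeager-ly, $\clusterfin \subseteq \Lambda_{x\upharpoonright \omega}(\mathcal{I};q) \subseteq \Lambda_{x\upharpoonright \omega}(\mathcal{I})$, which combined with Lemma \ref{lemma_inclusion} gives $\Lambda_{x\upharpoonright \omega}(\mathcal{I}) = \clusterfin$.

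The ``in particular'' assertion follows at once: the intersection of two comeager sets is comeager, and on it both $\Gamma_{x\upharpoonright \omega}(\mathcal{I})$ and $\Lambda_{x\upharpoonright \omega}(\mathcal{I})$ equal $\clusterfin$, hence each other. No genuine obstacle is expected; the only subtle point is the detour via $\Lambda_{x\upharpoonright \omega}(\mathcal{I};q)$ in the generalized density case for $\Lambda$, which is forced by the potential lack of closedness of $\Lambda_{x\upharpoonright \omega}(\mathcal{I})$ and is precisely what Lemmas \ref{lem:closedness} and \ref{lemma_key_limit} were engineered for.
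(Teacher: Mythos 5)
Your proposal is correct and is essentially the paper's own argument: the paper gives no separate proof of Corollary \ref{cor:final}, stating only that it ``follows from the proofs of the main results,'' and what you spell out --- that the \emph{If}-part arguments (Lemma \ref{lem:fsigma} and the closure of $\Gamma_{x\upharpoonright\omega}(\mathcal{I})$ for the cluster-point set; Lemma \ref{lemma_key_limit}, Corollary \ref{cor:limit123}, the identity $\mathscr{V}_\ell(x;q)=\{\omega:\ell\in\Lambda_{x\upharpoonright\omega}(\mathcal{I};q)\}$ and Lemma \ref{lem:closedness} for the limit-point set) never actually use the hypotheses $\Gamma_x(\mathcal{I})=\clusterfin$ or $\Lambda_x(\mathcal{I})=\clusterfin$, since the needed witnesses $\omega_\ell$ exist for every $\ell\in\clusterfin$ by definition --- is exactly that observation made explicit.
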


We leave as an open question to check whether Theorem \ref{thm:main2} may be extended to the whole class of $F_{\sigma \delta}$-ideals (hence, in particular, analytic P-ideals).

\subsection*{Acknowledgments}
The author is grateful to the anonymous referee for pointing out \cite[Corollary 2]{Sle} and for a careful reading of the manuscript which led, in particular, to an improvement of Lemma \ref{lem:fsigma} and the extension of Theorem \ref{thm:main} to all $F_{\sigma\delta}$-ideals.


\begin{thebibliography}{99}

\bibitem{MR3391516}
M.~Balcerzak, P.~Das, M.~Filipczak, and J.~Swaczyna, \emph{{Generalized kinds
  of density and the associated ideals}}, Acta Math. Hungar. \textbf{147}
  (2015), no.~1, 97--115. 

\bibitem{MR3568092}
M.~Balcerzak, Sz. G\l{a}b, and A.~Wachowicz, \emph{{Qualitative properties of
  ideal convergent subsequences and rearrangements}}, Acta Math. Hungar.
  \textbf{150} (2016), no.~2, 312--323. 

\bibitem{PaoloMarek17}
M.~Balcerzak and P.~Leonetti, \emph{{On the Relationship between Ideal Cluster
  Points and Ideal Limit Points}}, Topology Appl. \textbf{252}, 178--190.

\bibitem{MR3436368}
P.~Borodulin-Nadzieja, B.~Farkas, and G.~Plebanek, \emph{{Representations of
  ideals in {P}olish groups and in {B}anach spaces}}, J. Symb. Log. \textbf{80}
  (2015), no.~4, 1268--1289. 

\bibitem{MR1726779}
N.~Bourbaki, \emph{{General topology. {C}hapters 1--4}}, {Elements of
  Mathematics (Berlin)}, Springer-Verlag, Berlin, 1998, Translated from the
  French, Reprint of the 1989 English translation. 

\bibitem{MR0009997}
R.~C. Buck, \emph{{Limit points of subsequences}}, Bull. Amer. Math. Soc.
  \textbf{50} (1944), 395--397. 

\bibitem{MR1372186}
J.~Connor and J.~Kline, \emph{{On statistical limit points and the consistency
  of statistical convergence}}, J. Math. Anal. Appl. \textbf{197} (1996),
  no.~2, 392--399. 

\bibitem{MR0316930}
D.~F. Dawson, \emph{{Summability of subsequences and stretchings of
  sequences}}, Pacific J. Math. \textbf{44} (1973), 455--460. 

\bibitem{MR2463821}
G.~{Di Maio} and L.~D.~R. Ko\v{c}inac, \emph{{Statistical convergence in
  topology}}, Topology Appl. \textbf{156} (2008), no.~1, 28--45. 

\bibitem{MR1680630}
I.~Farah, \emph{{Ideals induced by {T}sirelson submeasures}}, Fund. Math.
  \textbf{159} (1999), no.~3, 243--258. 

\bibitem{MR1711328}
\bysame, \emph{{Analytic quotients: theory of liftings for quotients over
  analytic ideals on the integers}}, Mem. Amer. Math. Soc. \textbf{148} (2000),
  no.~702, xvi+177. 

\bibitem{MR1988247}
\bysame, \emph{{How many {B}oolean algebras
  {${\mathscr{P}}({\mathbb{N}})/\mathscr{I}$} are there?}}, Illinois J. Math.
  \textbf{46} (2002), no.~4, 999--1033. 

\bibitem{MR2254542}
\bysame, \emph{{Analytic {H}ausdorff gaps. {II}. {T}he density zero ideal}},
  Israel J. Math. \textbf{154} (2006), 235--246.

\bibitem{MR2320288}
R.~Filip{\'o}w, N.~Mro\.{z}ek, I.~Rec\l{a}w, and P.~Szuca, \emph{{Ideal
  convergence of bounded sequences}}, J. Symbolic Logic \textbf{72} (2007),
  no.~2, 501--512. 

\bibitem{MR1181163}
J.~A. Fridy, \emph{{Statistical limit points}}, Proc. Amer. Math. Soc.
  \textbf{118} (1993), no.~4, 1187--1192.

\bibitem{MR1416085}
J.~A. Fridy and C.~Orhan, \emph{{Statistical limit superior and limit
  inferior}}, Proc. Amer. Math. Soc. \textbf{125} (1997), no.~12, 3625--3631.

\bibitem{MR3171606}
M.~Hru\v{s}\'{a}k, D.~Rojas-Rebolledo, and J.~Zapletal, \emph{Cofinalities of
  {B}orel ideals}, MLQ Math. Log. Q. \textbf{60} (2014), no.~1-2, 31--39.

\bibitem{MR748847}
W.~Just and A.~Krawczyk, \emph{{On certain {B}oolean algebras {${\mathscr
  P}(\omega )/I$}}}, Trans. Amer. Math. Soc. \textbf{285} (1984), no.~1,
  411--429. 

\bibitem{MR1838788}
P.~Kostyrko, M.~Ma\v{c}aj, T.~\v{S}al{\'a}t, and O.~Strauch, \emph{{On
  statistical limit points}}, Proc. Amer. Math. Soc. \textbf{129} (2001),
  no.~9, 2647--2654. 

\bibitem{MR2491780}
M.~Laczkovich and I.~Rec\l{}aw, \emph{Ideal limits of sequences of continuous
  functions}, Fund. Math. \textbf{203} (2009), no.~1, 39--46. 

\bibitem{Leo17}
P.~Leonetti, \emph{{Thinnable Ideals and Invariance of Cluster Points}}, Rocky
  Mountain J. Math. \textbf{48} (2018), no.~6, 1951--1961.
  
\bibitem{Leo17b}
\bysame, \emph{{Invariance of Ideal Limit Points}}, Topology Appl.
  \textbf{252} (2019), 169--177.
  
  \bibitem{LeoJMAA19}
\bysame, \emph{Characterizations of the ideal core}, J. Math. Anal. Appl. \textbf{477} (2019), no.~2, 1063--1071.

\bibitem{LMxyz}
P.~Leonetti and F.~Maccheroni, \emph{Characterizations of Ideal Cluster
  Points}, Analysis (Berlin) (2019), \textbf{39}, no.~1, 19--26.
  


\bibitem{LMM}
P.~Leonetti, H.~Miller, and L.~{Miller-Van Wieren}, \emph{{Duality between
  Measure and Category of Almost All Subsequences of a Given Sequence}},
  Period. Math. Hungar. \textbf{78} (2019), no.~2, 152--156.

\bibitem{MR1169042}
A.~Louveau and B.~Veli\v{c}kovi{\'c}, \emph{{A note on {B}orel equivalence
  relations}}, Proc. Amer. Math. Soc. \textbf{120} (1994), no.~1, 255--259.
  

\bibitem{MR1260176}
H.~I. Miller, \emph{{A measure theoretical subsequence characterization of
  statistical convergence}}, Trans. Amer. Math. Soc. \textbf{347} (1995),
  no.~5, 1811--1819. 

\bibitem{Sle}
M.~Slezial and M.~Ziman, \emph{Range of density measures}, Acta Math. Univ. Ostrav. \textbf{17} (2009), no.~1, 33--50.

\bibitem{MR1708146}
S.~Solecki, \emph{{Analytic ideals and their applications}}, Ann. Pure Appl.
  Logic \textbf{99} (1999), no.~1-3, 51--72. 

\bibitem{MR1758325}
\bysame, \emph{{Filters and sequences}}, Fund. Math. \textbf{163} (2000),
  no.~3, 215--228. 

\bibitem{MR1680626}
B.~Veli\v{c}kovi{\'c}, \emph{{A note on {T}sirelson type ideals}}, Fund. Math.
  \textbf{159} (1999), no.~3, 259--268.


\end{thebibliography}

\end{document}